\newtheorem{theorem}{Theorem}[section]
\newtheorem{Proposition}{Proposition}[section]
\newtheorem{Lemma}{Lemma}[section]
\newtheorem{Example}{Example}[section]
\newtheorem{Remark}{Remark}[section]
\begin{document}

%
%
% title
%
%
\title{The divisibility of zeta functions of cyclotomic function fields}
\author{Daisuke Shiomi}
\date{}
\maketitle

%
% 
%Footnote
%
%
\footnote{2010 Mathematics Subject Classification: 11M38, 11R60}
\footnote{Key words and Phrases: Zeta functions,  Cyclotomic function fields}
%
%
%
%
% section (Introduction)
%
%
%
%
\section{Introduction}
\quad
Let $\mathbb{F}_q$ be the finite field with $q$ elements of characteristic $p$.
Let $k=\mathbb{F}_q(T)$ be the rational function field over $\mathbb{F}_q$, and 
put $A=\mathbb{F}_q[T]$.
Let $A^+$ be the set of all monic polynomials of $A$.
For $m \in A^+$, let $K_m$
be the $m$th cyclotomic function field (see Section 2).
Let $h_m$ be the class number of $K_m$.
It is known that $h_m$ splits up into $h_m^-h_m^+$,
where $h_m^+$ is the class number of the 
maximal real subfield of $K_m$. 

In \cite{go1}, 
Goss introduced an analogue of Bernoulli numbers in function fields, and 
 gave a Kummer-type criterion for $h_m$.
For $n \ge 1$, 
we set 
%
%
% Definition of B_n
%
%
\begin{eqnarray}
B_n=
\left\{
\begin{array}{ll}
\displaystyle\sum_{i=0}^{\infty}s_i(n) & \mbox{ if $n \not\equiv 0 \mod q-1 $,} \\[14pt]
\displaystyle\sum_{i=0}^{\infty} -is_i(n)&
 \mbox{ if $n \equiv 0 \mod q-1 $}, 
\end{array}
\right.
\end{eqnarray}
where
\begin{eqnarray*}
s_i(n)=\sum_{a \in A^+ \atop \deg a=i} a^n.
\end{eqnarray*}
We see that $s_i(n)=0$ for large $i$ (see Proposition \ref{prop31}).
Hence $B_n$ is a polynomial in $A$, which is called 
the Bernoulli-Goss polynomial.
%
%
% Theorem 1.1
%
%
\begin{theorem}{ \rm(cf. Theorem 3.4 in \cite{go1})}
\label{th11}
Assume that $m \in A^+$ is irreducible of degree $d$. Then,
\begin{itemize}
\item[(1)] $p \mid h_m^-$  if and only if
$m \mid B_n$ for some
$ 1 \le n \le q^d-2 \;( \; n \not\equiv 0 \text{\; \rm mod } q-1)$.
 \item[(2)]
 $p \mid h_m^+$  if and only if
 $m \mid B_n$ for some
$ 1 \le n \le q^d-2 \;( \; n \equiv 0 \text{\; \rm mod } q-1)$.
\end{itemize}
\end{theorem}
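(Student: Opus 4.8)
The plan is to express $h_m^{-}$ and $h_m^{+}$ as products of values of Dirichlet $L$-polynomials over $k$ and then to reduce those values modulo a prime above $p$.

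Since $m$ is irreducible of degree $d$, the Carlitz action identifies $G:=\mathrm{Gal}(K_m/k)$ with $(A/m)^{\times}\cong\mathbb{F}_{q^{d}}^{\times}$, cyclic of order $q^{d}-1$. Fix a prime $\mathfrak{P}$ of $\mathbb{Z}[\mu_{q^{d}-1}]$ above $p$; because $q^{d}-1$ is prime to $p$, reduction modulo $\mathfrak{P}$ gives an isomorphism $\mu_{q^{d}-1}\xrightarrow{\sim}\mathbb{F}_{q^{d}}^{\times}=(A/m)^{\times}$. For $1\le n\le q^{d}-2$ let $\chi_{n}\colon G\to\mathbb{Z}[\mu_{q^{d}-1}]^{\times}$ be the Dirichlet character modulo $m$ whose reduction modulo $\mathfrak{P}$ sends $a\bmod m$ to $(a\bmod m)^{n}$, and set $L(t,\chi_{n})=\sum_{a\in A^{+},\,(a,m)=1}\chi_{n}(a)\,t^{\deg a}$, which lies in $\mathbb{Z}[\mu_{q^{d}-1}][t]$ because $\sum_{\deg a=i}\chi_{n}(a)=0$ for $i\ge d$. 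The characters $\chi_{n}$ with $n\equiv 0\pmod{q-1}$ are precisely those that factor through $\mathrm{Gal}(K_{m}^{+}/k)$, and the remaining $\chi_{n}$ are ramified at the place $\infty$.

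Using that $K_{m}$ has constant field $\mathbb{F}_{q}$ together with the analytic class number formula for curves, $h_{m}$ (resp.\ $h_{m}^{+}$) equals the value at $t=1$ of the numerator of the zeta function over $\mathbb{F}_{q}$ of the curve with function field $K_{m}$ (resp.\ $K_{m}^{+}$). Factoring these zeta functions as products of Artin $L$-functions over the characters of $G$ (resp.\ of $\mathrm{Gal}(K_{m}^{+}/k)$) — the trivial character contributing the zeta function of $k$, namely $1/((1-t)(1-qt))$, which cancels the denominator — one obtains that $h_{m}^{+}$ is the product of $L^{\mathrm{Art}}(1,\chi_{n})$ over the $n\in[1,q^{d}-2]$ with $n\equiv 0\pmod{q-1}$, and hence $h_{m}^{-}=h_{m}/h_{m}^{+}$ is the product over those with $n\not\equiv 0\pmod{q-1}$. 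Here $L^{\mathrm{Art}}(t,\chi_{n})=L(t,\chi_{n})$ when $\chi_{n}$ is ramified at $\infty$, while $L^{\mathrm{Art}}(t,\chi_{n})=L(t,\chi_{n})/(1-t)$ when $\chi_{n}$ is nontrivial and unramified at $\infty$: in the latter case one uses that $\infty$ splits completely in $K_{m}^{+}/k$, so the Euler factor of $\chi_{n}$ at $\infty$ is $(1-\chi_{n}(\infty)t)^{-1}$ with $\chi_{n}(\infty)=1$, and since $L^{\mathrm{Art}}(t,\chi_{n})$ is a polynomial this forces $(1-t)\mid L(t,\chi_{n})$. As each $L^{\mathrm{Art}}(1,\chi_{n})$ is an algebraic integer, $p\mid h_{m}^{-}$ (resp.\ $p\mid h_{m}^{+}$) if and only if $\mathfrak{P}\mid L^{\mathrm{Art}}(1,\chi_{n})$ for some $n\in[1,q^{d}-2]$ with $n\not\equiv 0\pmod{q-1}$ (resp.\ with $n\equiv 0\pmod{q-1}$).

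It remains to evaluate $L^{\mathrm{Art}}(1,\chi_{n})$ modulo $\mathfrak{P}$. Identify $\mathbb{Z}[\mu_{q^{d}-1}]/\mathfrak{P}$ with $A/m$; then the coefficient $c_{i}=\sum_{\deg a=i,\,(a,m)=1}\chi_{n}(a)$ of $L(t,\chi_{n})$ reduces to $s_{i}'(n)\bmod m$, where $s_{i}'(n)=\sum_{\deg a=i,\,(a,m)=1}a^{n}=s_{i}(n)-m^{n}s_{i-d}(n)$ (with $s_{j}(n)=0$ for $j<0$). If $n\not\equiv 0\pmod{q-1}$, then $L^{\mathrm{Art}}(1,\chi_{n})=\sum_{i}c_{i}$ reduces modulo $m$ to $\sum_{i}s_{i}'(n)=B_{n}-m^{n}B_{n}=(1-m^{n})B_{n}$, and since $1-m^{n}\equiv 1\not\equiv 0\pmod{m}$ we get $\mathfrak{P}\mid L^{\mathrm{Art}}(1,\chi_{n})\iff m\mid B_{n}$. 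If $n\equiv 0\pmod{q-1}$ and $n\ne 0$, then $\sum_{i}c_{i}=0$, so writing $L(t,\chi_{n})=(1-t)L^{\mathrm{Art}}(t,\chi_{n})$ and differentiating at $t=1$ gives $L^{\mathrm{Art}}(1,\chi_{n})=-\sum_{i}i\,c_{i}$, which reduces to $-\sum_{i}i\,s_{i}'(n)\equiv-\sum_{i}i\,s_{i}(n)=\sum_{i}(-i)s_{i}(n)=B_{n}\pmod{m}$ (the term involving $m^{n}$ dropping modulo $m$); again $\mathfrak{P}\mid L^{\mathrm{Art}}(1,\chi_{n})\iff m\mid B_{n}$. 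Combined with the previous paragraph this proves (1) and (2), and it accounts for the two-case definition of $B_{n}$.

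The main obstacle is the third paragraph: establishing the class number formula \emph{without extraneous factors}, so that $p$-divisibility is detected purely by the $L$-values, and — above all — the local analysis at $\infty$ showing $\chi_{n}(\infty)=1$ for the unramified $\chi_{n}$ (i.e.\ that $\infty$ splits completely in $K_{m}^{+}/k$), which produces the factor $(1-t)$ in $L(t,\chi_{n})$ responsible for the derivative-type formula for $B_{n}$ when $n\equiv 0\pmod{q-1}$. This rests on Hayes's class field theory for cyclotomic function fields, the conductor--discriminant formula, and the identification of $\#\mathrm{Pic}^{0}$ with the value of the zeta numerator at $t=1$; the remaining manipulations with the power sums $s_{i}(n)$ are routine.
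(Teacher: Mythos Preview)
Your argument is correct: the Teichm\"uller lift of the characters of $(A/m)^{\times}$ via a prime $\mathfrak{P}\mid p$ of $\mathbb{Z}[\mu_{q^{d}-1}]$, the factorization of the zeta numerator into Artin $L$-polynomials, the identification of the even characters with those trivial on $\mathbb{F}_q^{\times}$, and the handling of the Euler factor at $\infty$ (using that $\infty$ splits completely in $K_m^{+}$) are all set up properly, and the reduction $L^{\mathrm{Art}}(1,\chi_n)\equiv B_n\pmod m$ is carried out cleanly in both cases. This is essentially Goss's original argument.

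The paper itself does not give a direct proof of this statement; it records Theorem~\ref{th11} as a citation and later recovers it as the specialization $f(u)=u-1$, $\alpha=1$ of Theorem~\ref{th41}. That theorem in turn rests on Proposition~\ref{prop41}, taken from \cite{sh2}, which asserts the polynomial-level congruence $\bar{Z}_m^{(\pm)}(u)=\bar{F}_d^{(\pm)}(u)=\prod_n \bar{B}_n(u)$ in $(A/mA)[u]$. The proof of that proposition is exactly your computation kept as an identity in the variable $u$ rather than evaluated at $u=1$. So the two routes share the same core (Teichm\"uller reduction of the $L$-factorization); the difference is that the paper packages the computation at the level of the full polynomial $\bar{Z}_m^{(\pm)}(u)$, which is what allows it to test divisibility by an arbitrary irreducible $f(u)\in\mathbb{F}_p[u]$, whereas your direct argument is tailored to the value at $u=1$ and hence to the class number. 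Your write-up has the advantage of being self-contained and of making explicit the local analysis at $\infty$ responsible for the derivative-type formula when $n\equiv 0\pmod{q-1}$.
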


As an application of Theorem \ref{th11}, Goss also
showed that there exit infinitely many irreducible $m \in A^+$ with $p \mid h_m^-$
when $q=p\ge 3$. 
Feng \cite{fe} generalized the result of Goss,  and proved that
% 
%
%
% Theorem 1.2
%
%
\begin{theorem}{\rm (cf. \cite{fe})}
\label{th12}
\begin{itemize}
\item[(1)]
If $q>2$, then
there exit infinitely many irreducible $m \in A^+$
with $p \mid h_m^-$.
\item[(2)]
There exit infinitely many irreducible $m \in A^+$
with $p \mid h_m^+$.
\end{itemize}
\end{theorem}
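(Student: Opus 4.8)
The plan is to prove both parts in the same manner, the cases (1) and (2) differing only in the residue class modulo $q-1$ imposed by Theorem \ref{th11}. By that theorem it suffices, in each case, to produce infinitely many irreducible $m\in A^{+}$ such that $m\mid B_{n}$ for some integer $n$ with $1\le n\le q^{\deg m}-2$ lying in the relevant residue class modulo $q-1$ (the nonzero classes for (1), which is where the hypothesis $q>2$ enters; the class $0$ for (2)). I would obtain each such $m$ as an irreducible factor of a well-chosen $B_{n}$, the point being to keep $n$ small compared with $q^{\deg m}$.

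First I would record two elementary facts about $B_{n}$. From the definition together with $s_{0}(n)=1$ one checks that the constant term of $B_{n}$ is $1$ in both cases, so in particular $B_{n}\neq 0$; and, using Proposition \ref{prop31} and the estimate $\deg_{T}s_{i}(n)\le n-i(q-1)$, one gets $\deg_{T}B_{n}\le n-(q-1)$ for $n\ge q-1$. Next, suppose for contradiction that only finitely many irreducible $m$ satisfy $p\mid h_{m}^{-}$ (resp.\ $p\mid h_{m}^{+}$); call them $P_{1},\dots,P_{r}$ and put $D=\max_{i}\deg P_{i}$. By Theorem \ref{th11}, an irreducible $m$ of degree $e>D$ divides no $B_{n}$ with $n$ in the relevant residue class and $1\le n\le q^{e}-2$; hence, for every such $n$ with $n\le q^{D+1}-2$, each irreducible factor of $B_{n}$ is one of $P_{1},\dots,P_{r}$ or has degree $\le D$. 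It then suffices to exhibit, for arbitrarily large $D$, a single $n\le q^{D+1}-2$ in the relevant residue class whose $B_{n}$ has an irreducible factor of degree $>D$ distinct from all the $P_{i}$; letting $D\to\infty$ yields infinitely many irreducible $m$ with the required property.

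The decisive step is this construction of $n$, and for it I would analyze $B_{n}$ modulo small irreducibles explicitly. Writing $F_{n}(x)=\sum_{i\ge 0}s_{i}(n)x^{i}=\prod_{P}\bigl(1-P^{n}x^{\deg P}\bigr)^{-1}$ (a polynomial in $x$ by Proposition \ref{prop31}, the product over monic irreducibles of $A$), one has $B_{n}=F_{n}(1)$ in case (1) and $B_{n}=-F_{n}'(1)$ in case (2). Modulo an irreducible $m$ the factor of the Euler product indexed by $m$ disappears; moreover, since the monic polynomials of degree $\le\deg m-1$ form a transversal of $(A/mA)^{\times}/\mathbb{F}_{q}^{\times}$, one has $\sum_{a}a^{n}\equiv\sum_{r}r^{n}\pmod m$, where $a$ runs over monic polynomials of degree $\le\deg m-1$ and $r$ over that transversal. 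Combining these with evaluations of the affine-subspace power sums $\sum_{v}(x+v)^{n}$ (for $v$ in an $\mathbb{F}_{q}$-subspace of $A/mA$) via binomial coefficients modulo $p$, I would choose $n$ of a prescribed $p$-adic shape in each interval $(q^{D},q^{D+1}]$ so that $B_{n}$ cannot be a product of $P_{i}$'s together with irreducibles of degree $\le D$, obtaining the contradiction. A guiding model computation is $B_{2q-1}=-(T^{q}-T-1)$ for $q>2$: an Artin--Schreier-type polynomial whose irreducible factors all have degree $p$, already enough (when $q=p$) to exhibit one irreducible $m$ with $p\mid h_{m}^{-}$, in the spirit of Goss's original application of Theorem \ref{th11}.

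The main obstacle is exactly this control of the factorization of $B_{n}$: the degree bound $\deg_{T}B_{n}\le n-(q-1)$ is far too weak on its own, since $B_{n}$ is a unit for many small $n$ and typically has only low-degree irreducible factors for moderate $n$; one must exploit the $p$-adic combinatorics of the power sums $s_{i}(n)$ to force an irreducible factor of unbounded degree along a suitable sequence of exponents.
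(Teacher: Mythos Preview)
Your overall strategy---produce, for each $d$, a Bernoulli--Goss polynomial $B_n$ whose irreducible factors all have degree exceeding $d$, then invoke Theorem~\ref{th11}---is the same as the paper's. But the decisive construction of $n$ is missing, and you say so yourself in the last paragraph. The Euler-product and affine-subspace manipulations you sketch are more intricate than what is actually needed and do not visibly lead to a workable choice of $n$; as written, the proposal is a plan rather than a proof.

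The paper's device (Theorem~\ref{th12} is the case $f(u)=u-1$, $\alpha=1$ of Theorem~\ref{th51}) rests on three short observations that replace your whole third paragraph. First, a periodicity lemma (Lemma~\ref{lem51}): if $n_1\equiv n_2\pmod{q^{\deg m}-1}$ then $B_{n_1}\equiv B_{n_2}\pmod m$, simply because $a^{n_1}\equiv a^{n_2}\pmod m$ for all $a\in A$. Second, two anchor values (Lemma~\ref{lem52}): $B_q=B_{2(q-1)}=1$, immediate from Proposition~\ref{prop32} since $l(q)=1$ and $l(2(q-1))=q-1$. Third, the choice of exponent: take $n=1+(q-1)q^{d!}$ for the minus part and $n=(q-1)+(q-1)q^{d!}$ for the plus part. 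For every $d_1\le d$ one has $d_1\mid d!$, hence $q^{d!}\equiv 1\pmod{q^{d_1}-1}$, so $n$ reduces to $q$ (respectively $2(q-1)$) modulo $q^{d_1}-1$; by periodicity $B_n\equiv 1\pmod m$ for every irreducible $m$ of degree at most $d$. On the other hand Propositions~\ref{prop31} and~\ref{prop32} give $\deg_T s_1(n)>0$ and $\deg_u B_n(u)\le 1$, so $B_n$ is nonconstant in $T$ (Lemma~\ref{lem53}). Any irreducible factor $m$ of $B_n$ therefore has degree $>d$; reducing $n$ modulo $q^{\deg m}-1$ yields $b$ with $1\le b\le q^{\deg m}-2$, $b$ in the required residue class modulo $q-1$, and $m\mid B_b$, and Theorem~\ref{th11} finishes.

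Note also that your framework insists on $n\le q^{D+1}-2$ from the outset. The paper's $n$ is enormous, but the periodicity lemma lets one pass to a small $b$ only at the very end. Without periodicity you are forced to analyze $B_n$ for small $n$ directly, which is precisely where your argument stalls.
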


%
%
% In this paper
%
% 
In this paper, we generalize Bernoulli-Goss polynomials, and 
give a criterion on the divisibility of zeta functions of cyclotomic function fields (see Theorem \ref{th41}).
As an application of our criterion, we generalize Theorem \ref{th12}
from the view point of zeta functions (see Theorem \ref{th51}). 

%
%
% 
% section2 (cyclotomic function field and zeta function)
%
%
%
\section{Cyclotomic function fields}
%%%%%%%%%%%%%%%%%%%%%%%%%%%%%%%%%
\quad \; In this section, we review some basic facts about 
cyclotomic function fields and their zeta functions. 
For details and proofs, see \cite{ha}, \cite{go2}, \cite{ro}.

%%%%%%%%%%%%%%%%%%%%%%%%%%%%%%%%%%%%%
Let $\bar{k}$ be an algebraic closure of $k$. 
For $x \in \bar{k}$ and $m \in A$,  we define the following action:
%
% carlitz action
%
\begin{eqnarray*}
m * x= m(\varphi+\mu)(x),
\end{eqnarray*}
where $\mathbb{F}_q$-linear isomorphisms $\varphi$, $\mu$ are defined by 
 \begin{eqnarray*}
 \varphi: \bar{k} \rightarrow \bar{k} \; (x \mapsto x^q), \\[5pt]
 \mu:\bar{k} \rightarrow \bar{k} \; (x\mapsto Tx).
 \end{eqnarray*}
 
 %%%%%%%%%%%%%%%%%%%%%%%%%%%%%%%%%%%%%%%%%%
By the above action, $\bar{k}$ becomes $A$-module, which is called the Carlitz module.
For $m \in A^+$, we set
 \begin{eqnarray*}
 \Lambda_m=\{ x \in \bar{k} \; | \; m*x=0 \}.
 \end{eqnarray*}
 %
 % cyclotmic function field 
 %
The field $K_m=k(\Lambda_m)$ is called  the
$m$th cyclotomic function field.
One showed that $K_m/k$ is a Galois extension, and
%
%
%
% Galois Iso (eq21)
%
%
%
\begin{eqnarray}
\text{Gal}(K_m/k)\simeq (A/mA)^{\times},
\label{eq21}
\end{eqnarray}
where $\text{Gal}(K_m/k)$ is the Galois group of $K_m/k$.
Let $P_{\infty}$ be the prime of $k$ with the valuation 
$\text{ord}_{\infty}$ satisfying $\text{ord}_{\infty}(1/T)=1$.
Let 
%
% def of K_m^+
%
\begin{eqnarray*}
K_m^+=k_{\infty} \cap K_m,
\end{eqnarray*}
where $k_{\infty} $ is the  completion of $k$ by $P_{\infty}$.
We call $K_m^+$ the maximal real subfield of $K_m$.
By the isomorphism (\ref{eq21}), 
we regard $\mathbb{F}_q^{\times} \subseteq \text{Gal}(K_m/k)$. 
It is known that  $K_m^+$ is the intermediate field of $K_m/k$
corresponding to $\mathbb{F}_q^{\times}$.
In particular, we see that $K_m=K_m^+$ when $q=2$. 

%
% zeta 
%
Next we consider the zeta functions of $K_m$ and $K_m^+$.  
For a global function field $K$ over $\mathbb{F}_q$, 
the zeta function of $K$ is defined by
%
%
%
% definition of zeta
%
%%
\begin{eqnarray*}
\zeta(s,K)=\prod_{\mathfrak{p}:\text{\rm prime}} \Bigl(1-\frac{1}
{{N \mathfrak{p}}^s}\Bigr)^{-1},
\end{eqnarray*}
where $\mathfrak{p}$ runs through all primes of $K$, 
and $N \mathfrak{p}$ is the number of elements of 
the residue class field of  $\mathfrak{p}$.
%
%
%
%
%
% Theorem 2.1(zeta)
%
%
%
%
\begin{theorem}(cf. Theorem 5.9 in \cite{ro})
Let $K$ be a global function field over $\mathbb{F}_q$.
Then there exists
$Z_K(u) \in \mathbb{Z}[u]$ such that
\begin{eqnarray*}
\zeta(s,K)=\frac{Z_K(q^{-s})}{(1-q^{-s})(1-q^{1-s})}.
\end{eqnarray*}
Moreover $Z_K(1)=h_K$, where $h_K$ is the class number of $K$.
\label{th21}
\end{theorem}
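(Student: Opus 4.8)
The plan is to rewrite $\zeta(s,K)$ as a generating function counting effective divisors and to extract the asserted rational form from the Riemann--Roch theorem. Putting $u=q^{-s}$, a prime $\mathfrak{p}$ of $K$ contributes the factor $(1-u^{\deg\mathfrak{p}})^{-1}$, so expanding the Euler product formally gives
\[
\zeta(s,K)=\sum_{D\ge 0}u^{\deg D}=\sum_{n\ge 0}a_nu^n,\qquad a_n:=\#\{\,D\ge 0:\deg D=n\,\},
\]
the sum running over effective divisors of $K$; each $a_n$ is finite because $K$ has only finitely many primes of a given degree, and the series converges for $\operatorname{Re}(s)$ large, which I would verify first. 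It then suffices to show that $Z_K(u):=(1-u)(1-qu)\sum_{n\ge 0}a_nu^n$ lies in $\mathbb{Z}[u]$ and that $Z_K(1)=h_K$. Since genus $0$ forces $K=\mathbb{F}_q(T)$, where the claim is immediate, I may assume the genus $g$ of $K$ satisfies $g\ge 1$.

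The key input is Riemann--Roch. Let $\mathcal{K}$ be a canonical divisor, $\deg\mathcal{K}=2g-2$, and for a divisor $D$ write $\ell(D)=\dim_{\mathbb{F}_q}L(D)$. The effective divisors linearly equivalent to $D$ are exactly the divisors of the nonzero elements of $L(D)$ taken up to $\mathbb{F}_q^{\times}$, so there are $(q^{\ell(D)}-1)/(q-1)$ of them. By F.~K.~Schmidt's theorem $K$ carries a divisor of degree $1$, hence $\operatorname{Pic}^n(K)$ has exactly $h_K$ elements for every $n\ge 0$ (it is a torsor under $\operatorname{Pic}^0(K)$, whose order is $h_K$). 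Grouping effective divisors of degree $n$ by linear equivalence class therefore yields
\[
a_n=\frac{1}{q-1}\sum_{[D]\in\operatorname{Pic}^n(K)}\bigl(q^{\ell(D)}-1\bigr).
\]

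Next I would split the sum over $n$ at $n=2g-2$. For $n>2g-2$ one has $\deg(\mathcal{K}-D)<0$, so $\ell(\mathcal{K}-D)=0$ and Riemann--Roch gives $\ell(D)=n+1-g$; hence $a_n=h_K(q^{n+1-g}-1)/(q-1)$ for all $n\ge 2g-1$. Summing this geometric tail and multiplying through by $(1-u)(1-qu)$ turns it into $\frac{h_K}{q-1}u^{2g-1}\bigl((q^g-1)-(q^g-q)u\bigr)$, a polynomial. Adding the finite piece $(1-u)(1-qu)\sum_{n=0}^{2g-2}a_nu^n$ shows $Z_K(u)$ is a polynomial of degree at most $2g$, with integer coefficients because the $a_n$ are nonnegative integers and $(q^g-1)/(q-1)$, $(q^g-q)/(q-1)$ are integers. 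Finally, at $u=1$ the finite piece vanishes and the tail piece contributes $\frac{h_K}{q-1}\bigl((q^g-1)-(q^g-q)\bigr)=h_K$, so $Z_K(1)=h_K$.

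There is no serious obstacle once Riemann--Roch is in hand, but two points need care. First, on the range $0\le n\le 2g-2$ Riemann--Roch does not pin down $\ell(D)$; this is harmless here because that range only feeds the polynomial summand $(1-u)(1-qu)\sum_{n=0}^{2g-2}a_nu^n$, whose individual coefficients are irrelevant to both assertions. (Determining them — and thereby proving $\deg Z_K=2g$ together with the functional equation — is the genuinely harder part of Weil's theory and requires duality, but it is not needed for the present statement.) Second, the argument uses the existence of a degree-$1$ divisor, for which I would invoke Schmidt's theorem; absent it, one only learns that divisor degrees are multiples of some $\delta\ge 1$, and a routine bookkeeping modification recovers the same conclusion.
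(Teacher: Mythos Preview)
The paper does not prove this theorem at all: it is quoted with the attribution ``(cf.\ Theorem~5.9 in \cite{ro})'' and used as background. So there is nothing in the paper to compare against.

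That said, your argument is correct and is exactly the classical proof one finds in Rosen (and in Weil before him): expand the Euler product as the generating series $\sum_{n\ge 0} a_n u^n$ for effective divisors, use Schmidt's degree-$1$ divisor to identify each $\operatorname{Pic}^n(K)$ as a $\operatorname{Pic}^0(K)$-torsor of size $h_K$, count effective divisors in a class via $L(D)$, and apply Riemann--Roch for $n\ge 2g-1$ so that the tail becomes a geometric series whose product with $(1-u)(1-qu)$ is the polynomial $\tfrac{h_K}{q-1}u^{2g-1}\bigl((q^g-1)-(q^g-q)u\bigr)$. Your evaluation at $u=1$ is clean, and the integrality check for $(q^g-1)/(q-1)$ and $(q^g-q)/(q-1)$ is the right thing to note. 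One small remark: the assertion ``genus $0$ forces $K=\mathbb{F}_q(T)$'' already uses that a genus-$0$ curve over a finite field has a rational point (e.g.\ via Chevalley--Warning for the associated conic), which you might want to flag explicitly rather than treat as immediate; alternatively, just run the same tail computation with $g=0$ and the convention that the finite sum $\sum_{n=0}^{-2}$ is empty.
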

By  Theorem \ref{th21}, there exist
$Z_m(u), \; Z_m^{(+)}(u) \in \mathbb{Z}[u]$ such that
\begin{eqnarray*}
\zeta(s,K_m)=\frac{Z_m(q^{-s})}{(1-q^{-s})(1-q^{1-s})},  \\ \\
\zeta(s,K_m^+)=\frac{Z_m^{(+)}(q^{-s})}{(1-q^{-s})(1-q^{1-s})}.
\end{eqnarray*}
One shows that $Z_m^{(+)}(u) \mid Z_m(u)$ (cf. Section 2 in  \cite{sh1}). 
Put
\begin{eqnarray*}
Z_m^{(-)}(u)=\frac{Z_m(u)}{Z_m^{(+)}(u)}.
\end{eqnarray*}
By Theorem \ref{th21}, we have  
%
% eq22
%
\begin{eqnarray}
\label{eq22}
h_m^-=Z_m^{(-)}(1), \;\;\; h_m^+=Z_m^{(+)}(1).
\end{eqnarray}

%
%
%
%
% section 3 (Bernoulli poly)
%
%
%
%
\section{Gekeler's results}
\quad 
For 
an integer
$
n=a_0+a_1q+\cdots+a_{d-1}q^{d-1} \;(0 \le a_i \le q-1)
$, 
we put 
\[
l(n)=a_0+a_1+\cdots +a_{d-1},
\]
and define $e_i$ ($1 \le i \le l(n)$) as follows:
%%%%%%%%%%%%%%%%%%%%%%%%%%%%%%%
%
% e_i
%
%%%%%%%%%%%%%%%%%%%%%%%%%%%%%%
\begin{eqnarray*}
n=\sum_{i=1}^{l(n)} q^{e_i} \;\;\;(0 \le e_i \le e_{i+1},\;\;  e_i<e_{i+q-1}).
\end{eqnarray*}
%%%%%%%%%%%%%%%%%%%%%%%%%%%%%%%
We set
\begin{eqnarray*}
\rho(n)=
\left\{
\begin{array}{ll}
-\infty  & \text{if } l(n)<q-1,\\\\
 n-\sum_{i=1}^{q-1}q^{e_i} & \text{otherwise}. 
\end{array}
\right.
\end{eqnarray*}
Moreover
$\rho(-\infty)=-\infty$, 
$\rho^{(0)}(n)=n$, and 
$\rho^{(i)}=\rho^{(i-1)} \circ \rho$.
In \cite{ge}, Gekeler gave the following result about the
degree of $s_i(n)$. 
%
%
%
% Proposition 3.1
%
%
%
\begin{Proposition}(cf. Proposition 2.11 in \cite{ge})
\label{prop31}
\begin{itemize}
\item[(1)]
For $i \ge 1$, we have
%
% eq 3-1
%
\begin{eqnarray}
\label{eq31}
\deg_T(s_i(n)) \le \rho^{(1)}(n) + \rho^{(2)}(n)+ \cdots +\rho^{(i)}(n).
\end{eqnarray}
In particular,  we have $s_i(n)=0$
if $l(n)/(q-1) <i$. 
\item[(2)] 
The equality (\ref{eq31}) holds if
%
% eq 3-2
%
\begin{eqnarray}
\label{eq32}
\binom{n}{\rho^{(s)}(n)} \not\equiv 0 \mod p ~~~(s=1,2,...,i),
\end{eqnarray}
where $\binom{*}{*}$ is a binomial coefficient.
\end{itemize}
\end{Proposition}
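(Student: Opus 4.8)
The plan is to convert the statement into an induction on $i$ driven by a recursion for the power sums. Every monic polynomial of degree $i\ge 1$ is uniquely of the form $Ta'+c$ with $a'$ monic of degree $i-1$ and $c\in\mathbb{F}_q$; expanding $(Ta'+c)^n$ by the binomial theorem and using that $\sum_{c\in\mathbb{F}_q}c^{m}$ equals $-1$ if $q-1\mid m$ and $m>0$, and $0$ otherwise, one gets
\[
s_i(n)=-\sum_{\substack{0\le j\le n-(q-1)\\ j\equiv n \bmod (q-1)}}\binom{n}{j}\,T^{j}\,s_{i-1}(j),\qquad s_0(n)=1.
\]
I would pair this with the base-$q$ form of Lucas' congruence: since $q$ is a power of $p$ we have $(1+X)^{q^{t}}=1+X^{q^{t}}$ in characteristic $p$, whence $\binom{n}{j}\equiv\prod_t\binom{n_t}{j_t}\bmod p$ for the base-$q$ digit expansions $n=\sum n_tq^{t}$, $j=\sum j_tq^{t}$; in particular $\binom{n}{j}\not\equiv 0\bmod p$ forces the multiset of powers of $q$ occurring in the base-$q$ expansion of $j$ (counted with multiplicity) to be a sub-multiset of that of $n$, and then $l(n-j)=l(n)-l(j)$.

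The combinatorial core is the description of $\rho$: if $q^{e_1}\le\cdots\le q^{e_{l(n)}}$ is the list of digit-powers of $n$ with multiplicity, then --- since subtracting at most $q-1$ of them never produces a carry --- $\rho^{(s)}(n)$ equals the sum of the largest $l(n)-s(q-1)$ of these (and is $-\infty$ once $l(n)<s(q-1)$). Iterating the recursion,
\[
s_i(n)=(-1)^{i}\sum_{n=n_0,n_1,\dots,n_i}\binom{n}{n_1}\binom{n_1}{n_2}\cdots\binom{n_{i-1}}{n_i}\;T^{\,n_1+\cdots+n_i},
\]
the sum over chains with $0\le n_{t+1}\le n_t-(q-1)$ and $n_{t+1}\equiv n_t\bmod(q-1)$; chains in which some $\binom{n_t}{n_{t+1}}\equiv 0\bmod p$ contribute nothing. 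For a surviving chain, transitivity of digit-inclusion shows the digit-powers of $n_t$ lie inside those of $n$, while $l(n_t-n_{t+1})=l(n_t)-l(n_{t+1})$ is a positive multiple of $q-1$, so $l(n_t)\le l(n)-t(q-1)$; hence $n_t$ is a sum of at most $l(n)-t(q-1)$ digit-powers of $n$, so $n_t\le\rho^{(t)}(n)$. Summing over $t$ bounds the $T$-degree of every monomial above by $\sum_{t=1}^{i}\rho^{(t)}(n)$, which is part~(1); and when $l(n)/(q-1)<i$ there is no surviving chain of length $i$ at all, so $s_i(n)=0$.

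For part~(2), observe that $n_1+\cdots+n_i=D:=\sum_{t=1}^{i}\rho^{(t)}(n)$ can occur only for the greedy chain $n_t=\rho^{(t)}(n)$, because each $n_t\le\rho^{(t)}(n)$; and that chain survives exactly when $\binom{\rho^{(t)}(n)}{\rho^{(t+1)}(n)}\not\equiv 0\bmod p$ for $t=0,\dots,i-1$. So the coefficient of $T^{D}$ in $s_i(n)$ is precisely $(-1)^{i}\prod_{t=0}^{i-1}\binom{\rho^{(t)}(n)}{\rho^{(t+1)}(n)}$, and it remains to see the hypothesis makes this nonzero modulo $p$. Fix a position and let $g(s)$ be the base-$q$ digit of $\rho^{(s)}(n)$ there. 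Since every digit of $n$ is $<q$ whereas passing from $\rho^{(s)}$ to $\rho^{(s+1)}$ removes a whole block of $q-1$ digit-powers, the sequence $g(0),g(1),\dots$ has the shape $g(0),\dots,g(0),r,0,0,\dots$ with a single intermediate value $0\le r<g(0)$; hence each factor $\binom{g(t)}{g(t+1)}$ is either $1$ or the corresponding digit-factor $\binom{g(0)}{r}$ of some $\binom{n}{\rho^{(s)}(n)}$, and all $\binom{n}{\rho^{(s)}(n)}$ are nonzero by assumption. Multiplying over the positions gives $\binom{\rho^{(t)}(n)}{\rho^{(t+1)}(n)}\not\equiv 0\bmod p$ for each $t$, i.e.\ equality in~(1).

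The step I expect to be the main obstacle is the leading-coefficient computation in part~(2): pinning the coefficient of $T^{D}$ down to $\pm\prod_t\binom{\rho^{(t)}(n)}{\rho^{(t+1)}(n)}$, which relies on the sharp bound $n_t\le\rho^{(t)}(n)$ together with the ``largest digit-powers'' description of $\rho^{(s)}$ so that $D$ is attained by a single chain and no cancellation can occur, and then proving this product is nonzero modulo $p$ through the digit-by-digit analysis, whose crucial input is that the base-$q$ digits of $n$ are strictly smaller than $q$, forcing the degeneration of the sequences $g(\cdot)$. The recursion, the base-$q$ Lucas congruence, and the degree bound in~(1) should be comparatively routine once these ingredients are in hand.
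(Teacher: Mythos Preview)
The paper does not give its own proof of this proposition: it is simply quoted from Gekeler's paper \cite{ge} (``cf.\ Proposition~2.11''), with no argument supplied. So there is nothing in the present paper to compare your proposal against.

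That said, your argument is correct and self-contained. The recursion
\[
s_i(n)=-\sum_{\substack{0\le j\le n-(q-1)\\ j\equiv n\bmod(q-1)}}\binom{n}{j}T^{j}s_{i-1}(j)
\]
follows exactly as you say from the bijection $a\leftrightarrow(Ta'+c)$ between monic polynomials of degree $i$ and pairs (monic of degree $i-1$, constant), together with the standard evaluation of $\sum_{c\in\mathbb{F}_q}c^{m}$. Iterating yields the chain expansion, and the base-$q$ form of Lucas' congruence (valid since $(1+X)^{q}=1+X^{q}$ in characteristic $p$) pins down which chains survive. Your identification of $\rho^{(s)}(n)$ with the sum of the largest $l(n)-s(q-1)$ digit-powers of $n$ is correct because removing the $q-1$ smallest digit-powers never causes a borrow (each digit of $n$ being $<q$), and from this the inequality $n_t\le\rho^{(t)}(n)$ for surviving chains and hence part~(1) follow cleanly. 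For part~(2), your observation that at each base-$q$ position the digit sequence $g(0),g(1),\dots$ is of the form $n_j,\dots,n_j,r,0,\dots,0$ (with at most one intermediate value, since $n_j\le q-1$) is the key point: it shows that every factor $\binom{g(t)}{g(t+1)}$ is either $1$ or equal to a base-$q$ digit-factor $\binom{n_j}{r}$ of some $\binom{n}{\rho^{(s)}(n)}$ with $s=t+1\in\{1,\dots,i\}$, which is nonzero modulo $p$ by hypothesis. This makes the product $\prod_{t=0}^{i-1}\binom{\rho^{(t)}(n)}{\rho^{(t+1)}(n)}$ nonzero and hence the leading coefficient nonzero.

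This is, in fact, essentially the line of argument in Gekeler's original paper, so while there is no proof in the present paper to match, your proposal aligns with the source it cites.
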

%
%
%
% C_n(u)
%
%
Let
\begin{eqnarray*}
C_n(u)=\sum_{i=0}^{\infty}s_i(n)u^i.
\end{eqnarray*}
By Proposition \ref{prop31}, we have $C_n(u) \in A[u]$, and 
\begin{eqnarray}
\label{eq33}
\deg_u C_n(u) \le \left[ \frac{l(n)}{q-1} \right],
\end{eqnarray}
where $[x]$ is the greatest integer that is less than or equal to $x$. 
For $n \ge 1$, we set
\begin{eqnarray*}
B_n(u)=
\left\{
\begin{array}{ll}
C_n(u)=\displaystyle\sum_{i=0}^{\infty}s_i(n)u^i  & \mbox{ if $n \not\equiv 0 \mod q-1 $,} \\ \\
\displaystyle\frac{C_n(u)}{1-u}=\displaystyle\sum_{i=0}^{\infty}\left(\sum_{j=0}^{i}s_j(n)\right)u^i  &
 \mbox{ if $n \equiv 0 \mod q-1 $.}
\end{array}
\right.
\end{eqnarray*}
\begin{Remark}
Supposed that $n \equiv 0 \mod q-1$.
Then $C_n(1)=0$ (cf. Lemma 6.1 in \cite{ge}). 
It follows that $B_n(u) \in A[u]$.
\end{Remark}
%
%
%
% Proposition 3.2(degree of B_n(u)
%
%
%
By the ineqality (\ref{eq33}), we have 
\begin{Proposition}
\label{prop32}
\quad

\begin{itemize}
\item[(1)] If $  n \not\equiv 0 \mod q-1$, then $\deg_u B_n(u) \le \Bigl[ \frac{l(n)}{q-1} \Bigr]$.
\item[(2)]  If  $n \equiv 0 \mod q-1$, then $\deg_u B_n(u) \le \Bigl[ \frac{l(n)}{q-1} \Bigr]-1$.
\end{itemize}
\end{Proposition}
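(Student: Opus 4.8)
The plan is to deduce Proposition \ref{prop32} directly from the degree bound (\ref{eq33}) on $C_n(u)$, treating the two cases separately.

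For part (1), when $n \not\equiv 0 \mod q-1$, we have $B_n(u) = C_n(u)$ by definition, so the claim is literally the inequality (\ref{eq33}), and nothing further is required.

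For part (2), when $n \equiv 0 \mod q-1$, we have $B_n(u) = C_n(u)/(1-u)$. By the Remark, $C_n(1) = 0$, so $(1-u)$ divides $C_n(u)$ in $A[u]$ and the quotient $B_n(u)$ is a genuine polynomial. Since dividing by a degree-one polynomial drops the degree by exactly one, we get $\deg_u B_n(u) = \deg_u C_n(u) - 1 \le \bigl[ l(n)/(q-1) \bigr] - 1$, which is the desired bound. One small point to be careful about: if $C_n(u)$ were the zero polynomial the formula $\deg_u B_n(u) = \deg_u C_n(u)-1$ would need the usual convention $\deg 0 = -\infty$, but in that degenerate case $B_n(u)=0$ as well and the stated inequality still holds vacuously, so there is no real obstacle. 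In short, the proposition is an immediate consequence of (\ref{eq33}) together with the factor $1-u$ in the denominator; I do not anticipate any genuine difficulty.
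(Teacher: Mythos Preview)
Your proposal is correct and matches the paper's approach exactly: the paper also presents Proposition~\ref{prop32} as an immediate consequence of the inequality~(\ref{eq33}), with the extra $-1$ in case (2) coming from the factor $1-u$ in the denominator of $B_n(u)$. Your remark about the degenerate case $C_n(u)=0$ is a harmless extra bit of care.
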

%
% 
% Remark
%
%
\begin{Remark}
Supposed that  $n \equiv 0 \mod q-1$. 
By considering the derivative of  $(1-u)B_n(u)=C_n(u)$, we have
\begin{eqnarray*}
B_n(1)=\sum_{i=0}^{\infty}-is_i(n)=B_n. 
\end{eqnarray*}
Therefore $B_n(1)=B_n$ for all $n \ge 1$. 
\end{Remark}

%
%
%
%
% section 4
%
%
%
%
%
\section{The divisibility of zeta functions}
\quad
Assume that $m \in A^+$  is irreducible of degree $d$.
Put
%
% mod of zeta polynomial
%
\begin{eqnarray*}
\bar{Z}_m^{(-)}(u) =Z_m^{(-)}(u) \text{ mod }p, \;\;\;
\bar{Z}_m^{(+)}(u) = Z_m^{(+)}(u) \text{ mod }p.
\end{eqnarray*}
Then 
$
\bar{Z}_m^{(-)}(u),  \; \bar{Z}_m^{(+)}(u)  \in \mathbb{F}_p[u] \subseteq (A/mA)[u]. 
$
We set
\begin{eqnarray*}
F_d^{(-)}(u)&=&\prod_{n=1\atop n \not\equiv 0 \text{ mod }q-1}^{q^d-2}B_n(u), \\ \\
F_d^{(+)}(u)&=&\prod_{n=1\atop n \equiv 0 \text{ mod }q-1}^{q^d-2}B_n(u).
\end{eqnarray*}
For $G(u) \in A[u]$, we write
$\bar{G}(u)=G(u) \mod m \in (A/mA)[u]$.
Then we have
%
%
%Proposition 41
%
%
\begin{Proposition}(cf. Proposition 2.2 in \cite{sh2})
\label{prop41}
\begin{itemize}
\item[(1)] $\bar{F}_d^{(-)}(u) = \bar{Z}_m^{(-)}(u)$, 
\item[(2)] $\bar{F}_d^{(+)}(u) = \bar{Z}_m^{(+)}(u)$.
\end{itemize}
In particular, we have
$
\bar{F}_d^{(-)}(u), \;\bar{F}_d^{(+)}(u) \in \mathbb{F}_p[u].
$
\end{Proposition}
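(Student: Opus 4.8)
The plan is to express the zeta functions $Z_m^{(\pm)}(u)$ through Gauss sums / Dirichlet $L$-functions of the characters of $\mathrm{Gal}(K_m/k) \simeq (A/mA)^\times$, reduce everything modulo $p$ (equivalently, modulo the prime $m$ since $A/mA = \mathbb{F}_{q^d}$ is an $\mathbb{F}_p$-algebra), and then match the resulting $L$-value expressions with the Bernoulli--Goss polynomials $B_n(u)$. Concretely, I would start from the decomposition
\begin{eqnarray*}
Z_m(u) = \prod_{\chi} L(u,\chi),
\end{eqnarray*}
where $\chi$ runs over the characters of $(A/mA)^\times$, with the trivial character's contribution being $(1-u)(1-qu)$ split off so that the product of the nontrivial $L$-functions gives $Z_m(u)$, and $Z_m^{(+)}(u)$ collects exactly the $\chi$ that are trivial on $\mathbb{F}_q^\times \subseteq (A/mA)^\times$ (i.e. the $\chi$ that factor through $\mathrm{Gal}(K_m^+/k)$). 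Then $Z_m^{(-)}(u)$ is the product over $\chi$ nontrivial on $\mathbb{F}_q^\times$.

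The next step is the reduction mod $m$. Since $m$ is irreducible of degree $d$, the residue field $A/mA \cong \mathbb{F}_{q^d}$ carries a Teichmüller-type character, and every character $\chi$ of $(A/mA)^\times$ is a power of a fixed generator; reducing mod $m$, the $L$-function $L(u,\chi)$ becomes (up to the trivial factors already removed) a truncation of a power series whose coefficients are the power sums $s_i(n) \bmod m$ for an appropriate exponent $n$ attached to $\chi$. This is exactly the content behind $C_n(u) = \sum_i s_i(n) u^i$: the Dirichlet character of ``weight'' $n$ corresponds, after reduction mod $m$, to the residue of $a \mapsto a^n$, so $L(u,\chi) \equiv C_n(u) \pmod m$ when $n \not\equiv 0 \bmod q-1$, and when $n \equiv 0 \bmod q-1$ one has to divide out the extra factor $(1-u)$ coming from the fact that such $\chi$ restricted to $\mathbb{F}_q^\times$ is trivial — which is precisely why $B_n(u)$ is defined as $C_n(u)/(1-u)$ in that case. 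Running $n$ over $1 \le n \le q^d - 2$ with $n \not\equiv 0$ (resp. $n \equiv 0$) mod $q-1$ parametrizes exactly the characters $\chi$ nontrivial on $\mathbb{F}_q^\times$ (resp. nontrivial but trivial on $\mathbb{F}_q^\times$), so taking the product over $n$ reproduces $\bar Z_m^{(-)}(u)$ (resp. $\bar Z_m^{(+)}(u)$). Since $Z_m^{(\pm)}(u) \in \mathbb{Z}[u]$, its reduction mod $m$ already lies in $\mathbb{F}_p[u]$, which gives the final ``in particular'' assertion, and this also forces $\bar F_d^{(\pm)}(u) \in \mathbb{F}_p[u]$ even though the individual $B_n(u)$ have coefficients in $A$.

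I would lean on Proposition 2.2 of \cite{sh2} for the core identity $L(u,\chi) \equiv B_n(u) \pmod m$ (with the appropriate correspondence $\chi \leftrightarrow n$), so the real work here is bookkeeping: verifying that the index set $1 \le n \le q^d-2$ is the correct range (the bound $q^d - 2$ is $\#(A/mA)^\times - 1$, excluding the trivial character), checking that the congruence condition $n \equiv 0 \bmod q-1$ versus $n \not\equiv 0 \bmod q-1$ cleanly separates the ``$+$'' and ``$-$'' parts in accordance with the behavior on $\mathbb{F}_q^\times$, and confirming there is no multiplicity or overcounting so that the product of $L$-functions matches the product $F_d^{(\pm)}(u)$ term by term. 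The main obstacle I anticipate is making the dictionary between a Dirichlet character $\chi$ of $(A/mA)^\times$ and its exponent $n$ fully precise after reduction mod $m$ — in particular handling the case $n \equiv 0 \bmod q-1$, where both $C_n(1) = 0$ and the extra cyclotomic factor $(1-u)$ must be accounted for, and being careful that the degree bounds of Proposition \ref{prop32} are consistent with the degrees of the $L$-functions predicted by Theorem \ref{th21}. Once that correspondence is pinned down, assembling (1) and (2) is a direct product over the respective character sets.
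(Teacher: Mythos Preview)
The paper does not actually prove this proposition: it is stated with a bare citation ``cf.\ Proposition~2.2 in \cite{sh2}'' and no argument is given in the present paper. So there is no in-paper proof to compare your proposal against; your sketch is in effect a reconstruction of what presumably appears in \cite{sh2}.

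Your overall strategy is the standard and correct one: decompose $\zeta(s,K_m)$ and $\zeta(s,K_m^+)$ into Dirichlet $L$-functions indexed by characters of $(A/mA)^\times$, observe that restriction to $\mathbb{F}_q^\times$ separates the characters into the ``plus'' and ``minus'' pieces according to whether $n\equiv 0$ or $n\not\equiv 0 \pmod{q-1}$, and then reduce mod $m$ using the Teichm\"uller lift so that $\chi(a)\equiv a^n \pmod m$ and hence $L(u,\chi)\equiv C_n(u)\pmod m$, with the extra $(1-u)$ factor in the even case accounted for by the definition of $B_n(u)$. This is exactly the expected route.

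There is, however, a circularity in your proposal that you should fix: you say you would ``lean on Proposition~2.2 of \cite{sh2} for the core identity $L(u,\chi)\equiv B_n(u)\pmod m$.'' But Proposition~2.2 of \cite{sh2} \emph{is} the statement you are trying to prove (that is what the ``cf.'' is pointing to). You cannot invoke it; you must establish the character-by-character congruence $L(u,\chi)\equiv B_n(u)\pmod m$ directly. This is not hard---it amounts to writing $L(u,\chi)=\sum_{i\ge 0}\bigl(\sum_{\deg a=i}\chi(a)\bigr)u^i$ as a polynomial (since $\chi$ is nontrivial), choosing the Teichm\"uller generator $\omega$ of the character group so that $\chi=\omega^{-n}$ satisfies $\chi(a)\equiv a^n\pmod{\mathfrak m}$ for a prime $\mathfrak m$ above $m$ in the ring of values, and then reducing coefficientwise---but it has to be done, not cited.
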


Let $f(u) \in \mathbb{F}_p[T]$ be a monic irreducible polynomial.
Let $\alpha$ be a root of $f(u)$, and $s=[\mathbb{F}_q(\alpha):\mathbb{F}_q]$. 
For $n \ge 1$, we define
\begin{eqnarray*}
B_{n}^{\alpha}=N_{\mathbb{F}_{q^s}(T)/\mathbb{F}_q(T)}(B_n(\alpha)) \in A,
\end{eqnarray*}
where $N_{\mathbb{F}_{q^s}(T)/\mathbb{F}_q(T)}$ is the norm 
from $\mathbb{F}_{q^s}(T)$ to $\mathbb{F}_{q}(T)$. 
Let $\alpha_1, \; \alpha_2, ...., \alpha_s$ be all conjugates
of $\alpha$ over $\mathbb{F}_q$. Then we have
\begin{eqnarray*}
B_{n}^{\alpha}=\prod_{i=1}^{s}B_n(\alpha_i).
\end{eqnarray*}
Now we state the main result of this paper. 
%
%
% Main theorem 41
%
%
\begin{theorem}
\label{th41}
\quad 
\begin{itemize}
\item[(1)] $f(u) \mid \bar{Z}_m^{(-)}(u)$  if and only if
$m \mid B^{\alpha}_n$ for some
$ 1 \le n \le q^d-2$ with $n \not\equiv 0 \text{ \rm mod } q-1$.
\item[(2)]
$f(u) \mid \bar{Z}_m^{(+)}(u)$  if and only if
$m \mid B^{\alpha}_{n}$ for some
$ 1 \le n \le q^d-2$ with $n \equiv 0 \text{ \rm mod } q-1$.
\end{itemize}
\end{theorem}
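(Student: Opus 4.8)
The plan is to reduce Theorem \ref{th41} to Proposition \ref{prop41} together with a norm/conjugacy argument, working in the finite ring $(A/mA)$, which is a field $\mathbb{F}_{q^d}$ since $m$ is irreducible of degree $d$. I will treat part (1); part (2) is identical after restricting the index $n$ to multiples of $q-1$.

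First I would set up the factorization of $\bar{Z}_m^{(-)}(u)$ over $\mathbb{F}_p$. Since $f(u)$ is monic irreducible over $\mathbb{F}_p$ and $\mathbb{F}_p$ is perfect, $f(u) \mid \bar{Z}_m^{(-)}(u)$ in $\mathbb{F}_p[u]$ if and only if $\bar{Z}_m^{(-)}(\alpha)=0$ for the chosen root $\alpha$. By Proposition \ref{prop41}(1) we have $\bar{Z}_m^{(-)}(u)=\bar{F}_d^{(-)}(u)$ as elements of $\mathbb{F}_p[u]$, and hence also after reduction $\bmod\, m$ inside $(A/mA)[u]$. Therefore $f(u) \mid \bar{Z}_m^{(-)}(u)$ is equivalent to $\bar{F}_d^{(-)}(\alpha)=0$ in $(A/mA)[\alpha]$, i.e.\ to
\[
\prod_{n=1,\; n \not\equiv 0 \bmod q-1}^{q^d-2} B_n(\alpha) \equiv 0 \pmod{\mathfrak{M}},
\]
where $\mathfrak{M}$ is any prime of $\mathbb{F}_{q^s}[T][\alpha]/\cdots$ lying over $m$; more concretely, working in the field $\mathbb{F}_{q^d}=A/mA$ one adjoins $\alpha$ and observes that the product vanishes in the (possibly larger) field $\mathbb{F}_{q^d}(\alpha)$ iff one of its factors $B_n(\alpha)$ vanishes there.

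Next I would translate "$B_n(\alpha)\equiv 0$ in $(A/mA)(\alpha)$" into "$m \mid B_n^\alpha$ in $A$". Here $B_n(\alpha)\in \mathbb{F}_{q^s}(T)=\mathbb{F}_{q^s}[T]$ (in fact it lies in $\mathbb{F}_{q^s}[T]$ by Proposition \ref{prop32}, its coefficients being in $A[\alpha_1,\dots]$), and $B_n^\alpha = \prod_{i=1}^s B_n(\alpha_i) = N_{\mathbb{F}_{q^s}(T)/\mathbb{F}_q(T)}(B_n(\alpha))$. The key point is that for the irreducible $m\in A^+$, the element $B_n(\alpha)$ is divisible by some prime of $\mathbb{F}_{q^s}[T]$ above $m$ if and only if its norm $B_n^\alpha$ is divisible by $m$ in $A$; and since the Galois group $\mathrm{Gal}(\mathbb{F}_{q^s}/\mathbb{F}_q)$ permutes both the primes above $m$ and the conjugates $\alpha_i$ transitively in a compatible way, the vanishing of $B_n(\alpha)$ modulo one prime above $m$ is equivalent to the vanishing of $B_n(\alpha_i)$ modulo the corresponding prime for every $i$, hence equivalent to $m\mid B_n^\alpha$. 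Combining this with the previous paragraph, $f(u)\mid \bar{Z}_m^{(-)}(u)$ iff $m\mid B_n^\alpha$ for some admissible $n$ in the range $1\le n\le q^d-2$, which is exactly the assertion.

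The main obstacle I anticipate is the bookkeeping in the middle step: making precise the passage from "a product of algebraic-integer-like elements vanishes in a finite field" to "one factor vanishes", and from there to a statement about norms and divisibility in $A$, while correctly tracking the two independent field extensions in play (the residue extension $A/mA = \mathbb{F}_{q^d}$ coming from $m$, and the constant-field extension $\mathbb{F}_{q^s}/\mathbb{F}_q$ coming from $\alpha$). One must check that $B_n(\alpha)$ reduces to a well-defined element modulo a chosen prime of $\mathbb{F}_{q^s}[T]$ over $m$, and that $N(B_n(\alpha))\bmod m$ equals the product over the $\mathrm{Gal}$-orbit of the residues — this uses that $m$ stays irreducible (or at least splits into Galois-conjugate factors) in $\mathbb{F}_{q^s}[T]$, which holds because $m$ is irreducible over $\mathbb{F}_q$. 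Once this compatibility is nailed down, the rest is a direct chain of equivalences driven by Proposition \ref{prop41}.
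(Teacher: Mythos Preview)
Your proposal is correct and follows essentially the same route as the paper: invoke Proposition~\ref{prop41} to replace $\bar Z_m^{(\pm)}$ by $\bar F_d^{(\pm)}$, then work in a common residue field containing both $\mathbb{F}_{q^s}$ and $A/mA$ to turn ``$f(u)\mid \bar F_d^{(\pm)}(u)$'' into ``some factor $B_n(\alpha_i)$ vanishes there,'' and finally translate this via the norm into $m\mid B_n^{\alpha}$. The only cosmetic difference is that the paper fixes one prime $\mathfrak{m}$ of $A_s=\mathbb{F}_{q^s}[T]$ above $m$ and lets the conjugate $\alpha_i$ vary, whereas you fix $\alpha$ and let the prime above $m$ vary---these are equivalent by the Galois action you describe, and the paper's formulation makes the ``bookkeeping'' you flag entirely explicit.
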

%
%
%
% proof of th41
%
%
%
\begin{proof}
Let $A_s=\mathbb{F}_{q^s}[T]$. 
Let $\mathfrak{m} \in A_s$ be a monic irreducible polynomial with $\mathfrak{m} \mid m$.
We regard
$\mathbb{F}_{q^s}$ and  $A/mA$ as subfields of $ A_s/\mathfrak{m}A_s$.
Then
\begin{eqnarray*}
m \mid B^{\alpha}_{n} \iff
\bar{B}_n(\alpha_i) =0 \;\; \text{ in }
 A_s/\mathfrak{m}A_s \; (\text{for some $1 \le i \le s$}).
\end{eqnarray*}
Hence the following conditions are equivalent.
%
%
% equivalent condiction
%
%
\begin{itemize}
\item[(i)] $m\mid B^{\alpha}_{n}$ for some  $1 \le n \le q^d-2$ 
with $n \not\equiv 0 \mod q-1$.
\item[(ii)]$\bar{F}_d^{(-)}(\alpha_i) =0$  in $A_s/\mathfrak{m}A_s$
for some $1 \le i \le s$.
\end{itemize}
Noting that $f(u)$ is the minimal polynomial of $\alpha_i$ over $\mathbb{F}_p$, 
the condition (ii) is equivalent to $f(u) \mid \bar{F}_d^{(-)}(u)$
(note that $\bar{F}_d^{(-)}(u) \in \mathbb{F}_p[u]$). 
Hence the claim (1) follows from Proposition \ref{prop41}. 

By the same argument, we have the claim (2). 
\end{proof}

%
% Remark
%
\begin{Remark}
Supposed that $f(u)=u-1$. 
By the equality (\ref{eq22}), we have 
\begin{eqnarray}
p\; | \; h_m^- &\iff& f(u) \mid \bar{Z}_m^{(-)}(u),  \label{eq41}\\[5pt]
p\; | \; h_m^+ &\iff& f(u) \mid \bar{Z}_m^{(+)}(u). \label{eq42}
\end{eqnarray}
We see that $B^{\alpha}_{n}=B_n$ when $\alpha=1$. 
Thus Theorem \ref{th41} yields Theorem \ref{th11}. 
\end{Remark}
%
%
% Ex 4-1
%
%
\begin{Example}
Let $q=2$ and $f(u)=u^2+u+1=(u-\alpha)(u-\beta)$.
Supposed that $1 \le n \le 2^3-2=6$. 
 By Proposition \ref{prop32},  we have 
$
B_n(u) =1+(1+s_1(n))u.
$
Therefore, 
\begin{eqnarray*}
B^{\alpha}_{n}=B_n(\alpha)B_n(\beta)=1+s_1(n)+s_1(n)^2.
\end{eqnarray*}
The factorizations of $B^{\alpha}_{n}$ in $A$ are as follows:
\begin{eqnarray*}
B^{\alpha}_{1}&=&B^{\alpha}_{2}=B^{\alpha}_{4}=1, \\[5pt]
B^{\alpha}_{3}&=&T^4+T+1, \\[5pt] 
B^{\alpha}_{5}&=&(T^4+T^3+1)(T^4+T^3+T^2+T+1), \\[5pt]
B^{\alpha}_{6}&=&(T^4+T+1)^2. 
\end{eqnarray*}
Therefore, by Theorem \ref{th41}, we have $f(u) \nmid \bar{Z}_m^{(+)}(u)$ for all irreducible
$m \in A^+$
of $\deg m \le 3$.
\end{Example}

%
%
% Ex4-2
%
%
\begin{Example}
Let $q=3$ and $f(u)=u^2+1=(u-\alpha)(u-\beta)$.
Supposed that $1 \le n \le 3^2-2=7 \;(n \not\equiv 0 \text{ \rm mod }2 )$.
Then we have 
$
B_n(u)=1+s_1(n)u.
$
Therefore, 
\begin{eqnarray*}
B^{\alpha}_{n}=(1+s_1(n)\alpha)(1+s_1(n)\beta)=1+s_1(n)^2. 
\end{eqnarray*}
The factorizations of $B_n^{\alpha}$ in $A$ are as follows:
\begin{eqnarray*}
B^{\alpha}_{1}&=& B^{\alpha}_{3}=1, \\[5pt]
B^{\alpha}_{5}&=&B^{\alpha}_{7}=(T^2+1)(T^2+T+2)(T^2+2T+2).
\end{eqnarray*}
Therefore  $f(u) \mid \bar{Z}^{(-)}_m(u)$ for 
$
m=T^2+1, \; T^2+T+2, \; T^2+2T+2.
$
\end{Example}
%
%
%
%
% section 5
%
%
%
%
\section{A generalization of Theorem \ref{th12}}
\quad
Let $f(u) \in \mathbb{F}_p[T]$ be a monic irreducible polynomial with
a root $\alpha$. 
%
% Lemma 51
%
\begin{Lemma}
\label{lem51}
Assume that  $m \in A^+$ is irreducible of degree $d$.
Supposed that $n_1 \equiv n_2 \mod q^d-1$.  Then 
$B^{\alpha}_{n_1} \equiv B^{\alpha}_{n_2} \mod m$.
\end{Lemma}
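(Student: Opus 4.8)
The plan is to reduce everything to the behaviour of the power sums $s_i(n)$ modulo $m$, since $B_n(u)$ is built from the $s_i(n)$ and $B_n^\alpha$ is a product of specializations $B_n(\alpha_j)$. First I would recall that, since $m$ is irreducible of degree $d$, the residue field $A/mA$ is $\mathbb{F}_{q^d}$, so every element of $A/mA$ satisfies $x^{q^d}=x$, hence $x^{q^d-1}=1$ for $x \not\equiv 0$. The key computational step is therefore to show $s_i(n_1) \equiv s_i(n_2) \bmod m$ for every $i \ge 0$ whenever $n_1 \equiv n_2 \bmod q^d-1$. For a fixed $i$, write $s_i(n) = \sum_{a \in A^+,\, \deg a = i} a^n$. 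The monic polynomials $a$ of degree $i$ that are divisible by $m$ contribute $0 \bmod m$, and for the remaining $a$ (those coprime to $m$) we have $a^{q^d-1} \equiv 1 \bmod m$, so $a^{n_1} \equiv a^{n_2} \bmod m$; summing gives $s_i(n_1) \equiv s_i(n_2) \bmod m$. (One should note $n_1, n_2 \ge 1$ so no issue with the exponent $0$; and if $q^d - 1 \mid n$ the term $a \equiv 0$ still contributes $0$, consistent with the convention.)

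Next I would transfer this congruence from the $s_i(n)$ to $B_n(u)$ and then to $B_n^\alpha$. In the case $n \not\equiv 0 \bmod q-1$ we have $B_n(u) = \sum_i s_i(n) u^i$, so $B_{n_1}(u) \equiv B_{n_2}(u) \bmod m$ coefficientwise in $(A/mA)[u]$; in the case $n \equiv 0 \bmod q-1$ we have $B_n(u) = \sum_i \bigl(\sum_{j \le i} s_j(n)\bigr) u^i$, and the same coefficientwise congruence follows. Note also that $n_1 \equiv n_2 \bmod q^d - 1$ forces $n_1 \equiv n_2 \bmod q - 1$ (as $q-1 \mid q^d-1$), so $n_1$ and $n_2$ fall in the same case and the comparison is legitimate. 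Now evaluate at a conjugate $\alpha_j$ of $\alpha$: working in $A_s/\mathfrak{m}A_s$ (with $\mathfrak{m} \mid m$ a monic irreducible of $A_s = \mathbb{F}_{q^s}[T]$, as in the proof of Theorem~\ref{th41}), the reductions of $B_{n_1}(u)$ and $B_{n_2}(u)$ agree, so $B_{n_1}(\alpha_j) \equiv B_{n_2}(\alpha_j)$ in $A_s/\mathfrak{m}A_s$ for each $j$. Taking the product over $j = 1, \dots, s$ gives $B_{n_1}^\alpha \equiv B_{n_2}^\alpha$ in $A_s/\mathfrak{m}A_s$; since both sides lie in $A$ and $A \cap \mathfrak{m}A_s = mA$, this yields $B_{n_1}^\alpha \equiv B_{n_2}^\alpha \bmod m$, as desired.

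I do not expect a serious obstacle here; the statement is essentially a periodicity fact inherited from $(A/mA)^\times$ having exponent dividing $q^d - 1$. The only points requiring a little care are bookkeeping ones: checking that $n_1, n_2$ genuinely land in the same branch of the definition of $B_n(u)$ (handled by $q-1 \mid q^d-1$), making sure the sums defining $s_i(n)$ and $B_n(u)$ are finite so that the congruences are between honest polynomials (this is Proposition~\ref{prop31}, via $s_i(n) = 0$ for $i > l(n)/(q-1)$), and descending the congruence from $A_s/\mathfrak{m}A_s$ back to $A/mA$ at the end. None of these is deep; the proof is short once the $s_i$-level congruence is in place.
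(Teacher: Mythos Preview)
Your proposal is correct and follows essentially the same route as the paper: reduce to $a^{n_1}\equiv a^{n_2}\bmod m$ for all $a\in A$ (using that $A/mA\cong\mathbb{F}_{q^d}$), deduce $s_i(n_1)\equiv s_i(n_2)\bmod m$, and conclude $B_{n_1}^{\alpha}\equiv B_{n_2}^{\alpha}\bmod m$. The paper's version is terser and leaves the passage from the $s_i$-congruences to the norm $B_n^{\alpha}$ implicit, whereas you spell out the branch check via $q-1\mid q^d-1$ and the descent through $A_s/\mathfrak{m}A_s$; these are exactly the bookkeeping points one would want to verify, and they cause no trouble.
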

\begin{proof}
Since $A/mA$ is a finite field with $q^d$ elements,
we have
\begin{eqnarray*}
a^{n_1} \equiv a^{n_2} \mod m \;\;\; \;\;\text{for all } a \in A.
\end{eqnarray*}
Hence
$s_i(n_1) \equiv s_i(n_2) \mod m ~(i=0,1,2,...)$.
This implies
$
B^{\alpha}_{n_1} \equiv B^{\alpha}_{n_2} \mod m.
$
\end{proof}

%
%
%  Lemma 52
%
%
\begin{Lemma}
\label{lem52}
$B^{\alpha}_{q}=B^{\alpha}_{2(q-1)}=1$. 
\end{Lemma}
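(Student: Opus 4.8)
The plan is to prove the stronger statement that $B_q(u) = 1$ and $B_{2(q-1)}(u) = 1$ as polynomials in $A[u]$; the lemma then follows at once, since whenever $B_n(u)$ is the constant polynomial $1$ we have $B_n^{\alpha} = \prod_{i=1}^{s} B_n(\alpha_i) = 1$.

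First I would treat $B_{2(q-1)}(u)$. The key observation is that the base-$q$ expansion of $2(q-1)$ is $2(q-1) = (q-2) + 1\cdot q$, so $l(2(q-1)) = (q-2) + 1 = q-1$. Since $2(q-1) \equiv 0 \bmod q-1$, Proposition \ref{prop32}(2) gives $\deg_u B_{2(q-1)}(u) \le \bigl[\tfrac{l(2(q-1))}{q-1}\bigr] - 1 = 1 - 1 = 0$. Hence $B_{2(q-1)}(u)$ is a constant, equal to its own constant term; reading off the $i=0$ coefficient from the definition of $B_n(u)$ in the case $n \equiv 0 \bmod q-1$, this constant is $\sum_{j=0}^{0} s_j(2(q-1)) = s_0(2(q-1)) = 1^{2(q-1)} = 1$. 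Thus $B_{2(q-1)}(u) = 1$.

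Next I would handle $B_q(u)$, splitting into two cases. If $q > 2$, then $q \not\equiv 0 \bmod q-1$, and the base-$q$ expansion $q = 1\cdot q$ gives $l(q) = 1$, so Proposition \ref{prop32}(1) yields $\deg_u B_q(u) \le \bigl[\tfrac{1}{q-1}\bigr] = 0$; hence $B_q(u)$ equals its constant term $s_0(q) = 1^q = 1$. If $q = 2$, then $q = 2 = 2(q-1)$, so $B_q(u) = B_{2(q-1)}(u) = 1$ by the previous step. In either case $B_q(u) = 1$, and together with the previous paragraph this completes the proof.

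I do not expect a genuine obstacle here: the whole argument is a degree bound from Proposition \ref{prop32} combined with the trivial evaluation $s_0(n) = 1^n = 1$. The only points requiring a little care are computing the digit sums $l(q) = 1$ and $l(2(q-1)) = q-1$ correctly — in particular noticing that the lowest base-$q$ digit of $2(q-1)$ is $q-2$, not $2(q-1)$ itself — and the degenerate overlap $q = 2(q-1)$, which occurs precisely when $q = 2$ and is why that case is absorbed into the $2(q-1)$ computation.
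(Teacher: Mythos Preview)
Your proof is correct and follows exactly the paper's approach: use Proposition \ref{prop32} to force $\deg_u B_q(u)=\deg_u B_{2(q-1)}(u)=0$, observe the constant term is $s_0(n)=1$, and then take norms. The paper states this in two lines without spelling out the digit-sum computations or the $q=2$ overlap, but the argument is the same.
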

\begin{proof}
By Proposition \ref{prop32},  we have
$
B_q(u)=B_{2(q-1)}(u)=1.
$
It follows that $B^{\alpha}_{q}=B^{\alpha}_{2(q-1)}=1$. 
\end{proof}
%
%
%  lemma 53
%
%
\begin{Lemma}
\label{lem53}
Let $d$ be a positive integer.
\begin{itemize}
\item[(1)] If $n=1+(q-1)q^d$, then $\deg_T B^{\alpha}_{n}>0$.
\item[(2)]  If $n=(q-1)+(q-1)q^d$, then $\deg_T B^{\alpha}_{n}>0$.
\end{itemize}
\end{Lemma}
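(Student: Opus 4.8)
The plan is to reduce both parts to a single degree estimate for $s_1(n)$ coming from Gekeler's Proposition~\ref{prop31}, and then transport it through the product $B^{\alpha}_n=\prod_{i=1}^{s}B_n(\alpha_i)$.

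First I would pin down the shape of $B_n(u)$. In part~(1), $n=1+(q-1)q^d$ has base-$q$ expansion with digit $1$ at place $0$ and digit $q-1$ at place $d$, so $l(n)=q$ and $n\equiv 1\bmod q-1$; thus $n\not\equiv 0\bmod q-1$ when $q\ge 3$ and $n\equiv 0\bmod q-1$ when $q=2$. In part~(2), $n=(q-1)+(q-1)q^d$ has digit $q-1$ at places $0$ and $d$, so $l(n)=2(q-1)$ and $n\equiv 0\bmod q-1$. In every case Proposition~\ref{prop32} gives $\deg_u B_n(u)\le 1$ (its part~(1) when $q\ge 3$ in case~(1), its part~(2) otherwise), and since the constant term of $B_n(u)$ is $s_0(n)=1$ we may write $B_n(u)=1+c\,u$ with $c\in A$, where, reading off the two branches of the definition of $B_n(u)$, $c=s_1(n)$ if $n\not\equiv 0\bmod q-1$ and $c=s_0(n)+s_1(n)=1+s_1(n)$ if $n\equiv 0\bmod q-1$. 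So the statement comes down to showing $\deg_T c>0$: we may assume $\alpha\neq 0$ (if $f(u)=u$ the asserted inequality fails, but that case is irrelevant to the application, since $u\nmid\bar Z^{(\pm)}_m(u)$), so each conjugate $\alpha_i$ lies in $\mathbb{F}_{q^s}^{\times}$, whence $B_n(\alpha_i)=1+c\alpha_i$ has $\deg_T=\deg_T c$ once $\deg_T c\ge 1$; as $\deg_T$ is additive over products in the domain $\mathbb{F}_{q^s}[T]$, this yields $\deg_T B^{\alpha}_n=s\deg_T c$.

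Next I would compute $\rho^{(1)}(n)=\rho(n)$ straight from the definition. In part~(1) the representation $n=\sum q^{e_i}$ has $e_1=0$ and $e_2=\cdots=e_q=d$, so $\rho(n)=n-(q^0+(q-2)q^d)=q^d$; in part~(2) it has $e_1=\cdots=e_{q-1}=0$ and $e_q=\cdots=e_{2(q-1)}=d$, so $\rho(n)=n-(q-1)q^0=(q-1)q^d$. In both cases $\rho^{(1)}(n)\ge 2$. To invoke Proposition~\ref{prop31}(2) with $i=1$ I then check $\binom{n}{\rho^{(1)}(n)}\not\equiv 0\bmod p$: writing $q=p^f$ and using $d\ge 1$, one sees the base-$p$ addition $\rho^{(1)}(n)+(n-\rho^{(1)}(n))$ has no carries — in part~(2) because the base-$p$ supports $\{fd,\dots,fd+f-1\}$ and $\{0,\dots,f-1\}$ are disjoint, and in part~(1) because the only shared place $fd$ carries the digits $1$ and $p-2$, which add without carry — so by Kummer's theorem $\binom{n}{\rho^{(1)}(n)}$ is a unit mod $p$. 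Hence $\deg_T s_1(n)=\rho^{(1)}(n)$ by Proposition~\ref{prop31}(2), and therefore $\deg_T c=\rho^{(1)}(n)$ (in the branch $n\equiv 0\bmod q-1$ the extra constant $1$ in $c=1+s_1(n)$ does not lower the degree, which is already $\ge 2$).

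Putting the pieces together, $\deg_T B^{\alpha}_n=s\,\rho^{(1)}(n)$, equal to $sq^d$ in part~(1) and $s(q-1)q^d$ in part~(2); both are positive, as claimed. The only step with real content is the no-carry verification of $\binom{n}{\rho^{(1)}(n)}\not\equiv 0\bmod p$ for general $q=p^f$; the rest is bookkeeping with the definitions of $l$, $\rho$ and $B_n(u)$, together with care over the degenerate case $q=2$ (where parts~(1) and~(2) coincide and $n\equiv 0\bmod q-1$ automatically) and over the split between the two residue classes of $n$ modulo $q-1$.
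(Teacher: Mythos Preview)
Your argument is correct and follows the same route as the paper: use Proposition~\ref{prop32} to write $B_n(u)=1+c\,u$, verify $\binom{n}{\rho(n)}\not\equiv 0\bmod p$ so that Proposition~\ref{prop31} gives $\deg_T s_1(n)=\rho(n)>0$, and then read off $\deg_T B^{\alpha}_n>0$ from the product over conjugates. You are in fact more careful than the paper on two points: the paper simply writes $B_n(u)=1+s_1(n)u$ in part~(1) without separating the branch $c=1+s_1(n)$ (tacitly taking $q\ge 3$), and the paper's step $\deg_T B_n(\alpha)=\deg_T s_1(n)$ also silently requires $\alpha\ne 0$, exactly the caveat you flag.
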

%
% Proof of Lemma
%
\begin{proof}
Let $n=1+(q-1)q^d$.  We see that 
\begin{eqnarray*}
\binom{n}{\rho^{(1)}(n)} =\binom{1+(q-1)q^d}{q^d}  \equiv -1 \mod p.
\end{eqnarray*}
By Proposition \ref{prop31}, 
we have $\deg_T s_1(n)=\rho^{(1)}(n)=q^d>0$.
By Proposition \ref{prop32}, we have
$
B_n(u)=1+s_1(n)u.
$
It follows that
\begin{eqnarray*}
\deg_T B^{\alpha}_{n} \ge \deg_T B_n(\alpha) = \deg_T s_1(n)
\end{eqnarray*}
in $\mathbb{F}_{q^s}[T]$.
Hence we have the  claim (1).
%
% proof of assertion (2)
%
Similarly, 
we have the claim (2).
\end{proof}

%
%
%
% theorem 51
% 
%
\begin{theorem}
\label{th51}
\quad
\begin{itemize}
\item[(1)]
Assume that  $q>2$. Then there exit infinitely many  irreducible  $m \in A^+$
with 
$f(u) \mid \bar{Z}_m^{(-)}(u)$. 
\item[(2)]
There exit infinitely many  irreducible  $m \in A^+$
with $f(u) \mid \bar{Z}_m^{(+)}(u)$. 
\end{itemize}
\end{theorem}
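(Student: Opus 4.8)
The plan is to deduce Theorem~\ref{th51} from Theorem~\ref{th41} by exhibiting, for each prescribed divisibility, an infinite supply of irreducible moduli $m$. Since Theorem~\ref{th41} translates ``$f(u)\mid\bar Z_m^{(\pm)}(u)$'' into ``$m\mid B_n^\alpha$ for some admissible $n$ in the range $1\le n\le q^d-2$'', the task reduces to finding, for infinitely many $d$, an index $n$ in that range with $m\mid B_n^\alpha$ for \emph{every} irreducible $m$ of degree $d$ — or at least for infinitely many such $m$. The natural candidate is to fix a single polynomial $B_N^\alpha\in A$ whose degree is positive, take any irreducible factor $m$ of it, and then use the periodicity Lemma~\ref{lem51} to replace $N$ by a representative $n$ lying in the correct range modulo $q^{\deg m}-1$.

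For part~(2), I would proceed as follows. Take $N=2(q-1)$; by Lemma~\ref{lem52} we have $B_N^\alpha=1$, which is useless directly, so instead consider $N=(q-1)+(q-1)q^d$ for a positive integer $d$. By Lemma~\ref{lem53}(2), $\deg_T B_N^\alpha>0$, so $B_N^\alpha$ has an irreducible factor $m\in A^+$; note $N\equiv 0\bmod q-1$. Now I want $\deg m$ to be controllable: since $\deg_T B_N(u)\le 1$ and its coefficients have degree bounded in terms of $\rho^{(i)}(N)\le q^d$, the degree of $B_N^\alpha=\prod_i B_N(\alpha_i)$ is $O(q^d)$, so for infinitely many $d$ we obtain irreducible factors $m$ of unbounded degree, hence infinitely many distinct $m$. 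For each such $m$ of degree $d'=\deg m$, Lemma~\ref{lem51} gives $B_n^\alpha\equiv B_N^\alpha\equiv 0\bmod m$ where $n$ is the residue of $N$ modulo $q^{d'}-1$ with $1\le n\le q^{d'}-1$; one checks $n\not\equiv q^{d'}-1$ (excluding the bad value, using $N\not\equiv 0\bmod q^{d'}-1$, which holds once $d\not\equiv 0$ and $d'$ is chosen suitably) so that $n\le q^{d'}-2$, and $n\equiv 0\bmod q-1$ since $N\equiv 0$ and $q-1\mid q^{d'}-1$. Then Theorem~\ref{th41}(2) yields $f(u)\mid\bar Z_m^{(+)}(u)$. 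Part~(1) is entirely parallel, using Lemma~\ref{lem53}(1) with $N=1+(q-1)q^d$, which requires $q>2$ precisely so that $N\not\equiv 0\bmod q-1$ lands us in the minus-part range; here one similarly extracts infinitely many irreducible $m$ from the polynomials $B_N^\alpha$ of growing degree.

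The main obstacle is the bookkeeping needed to guarantee that the reduced index $n$ genuinely lies in the open range $1\le n\le q^{d'}-2$ with the correct congruence class mod $q-1$, and simultaneously that we really get \emph{infinitely many distinct} irreducible $m$ rather than the same few primes recurring. The first point is handled by a careful choice of $d$ relative to $d'=\deg m$ (one can take $d$ itself and show the extracted factor has degree $d'\mid$ something, or more simply let $d$ range over a set forcing $\deg_T B_N^\alpha\to\infty$); the second follows because $\deg_T B_N^\alpha$ grows with $d$ by Lemma~\ref{lem53}, so no finite set of primes can account for all the factors. A mild subtlety is that $B_N^\alpha$ could be a prime power (as in the displayed Example with $B_6^\alpha=(T^4+T+1)^2$); this is harmless since we only need one irreducible factor and its degree still grows. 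I would also double-check the edge case $q=2$ in part~(2), where $K_m=K_m^+$ and the statement should degenerate gracefully; here $q-1=1$ so every $n$ satisfies $n\equiv 0\bmod q-1$ and the argument via $N=1+q^d=1+2^d$ still applies through Lemma~\ref{lem53}.
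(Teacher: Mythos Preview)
Your overall strategy matches the paper's: produce a nontrivial $B_N^\alpha$ via Lemma~\ref{lem53}, pick an irreducible factor $m$, reduce $N$ modulo $q^{\deg m}-1$ using Lemma~\ref{lem51}, and invoke Theorem~\ref{th41}. The gap is in the infinitude step. Your argument that ``$\deg_T B_N^\alpha$ grows with $d$, so no finite set of primes can account for all the factors'' is not valid as stated: a family like $m_0^{k}$ has unbounded degree with a single prime factor, so nothing you have written rules out every $B_{N_d}^\alpha$ being supported on the same finite set of primes. Your remark that a prime-power value is ``harmless since \dots\ its degree still grows'' is precisely the mistake---growing degree of the product says nothing about the degrees of its irreducible factors.

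The missing ingredient is exactly Lemma~\ref{lem52}, which you dismissed as ``useless''. The paper takes $N=1+(q-1)q^{d!}$ (resp.\ $N=(q-1)+(q-1)q^{d!}$), not $q^d$. The point of the factorial is that if an irreducible factor $m$ of $B_N^\alpha$ had degree $d_1\le d$, then $d_1\mid d!$, so $q^{d!}\equiv 1\pmod{q^{d_1}-1}$ and hence $N\equiv q$ (resp.\ $N\equiv 2(q-1)$) modulo $q^{d_1}-1$. Lemmas~\ref{lem51} and~\ref{lem52} then give $B_N^\alpha\equiv B_q^\alpha=1$ (resp.\ $\equiv B_{2(q-1)}^\alpha=1$) modulo $m$, contradicting $m\mid B_N^\alpha$. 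This forces $\deg m>d$, and since $d$ was arbitrary you get infinitely many $m$. Your vaguer plan (``choose $d$ relative to $d'$'', ``$d'$ chosen suitably'') can be made to work along the same lines---for a putative finite set of primes, take $d$ a common multiple of their degrees---but this again hinges on Lemma~\ref{lem52} to reach a contradiction. Once you have $\deg m>d\ge 2$, the verification that the reduced index $b$ lies in $1\le b\le q^{\deg m}-2$ with the correct residue mod $q-1$ is routine (the paper checks $b\neq 0$ in the plus case by a size estimate; in the minus case $b\equiv 1\pmod{q-1}$ automatically gives $b\neq 0$).
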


\begin{proof}
It is sufficient to show that there exists
 an irreducible $m \in A^+$ with $\deg m > d$ and  
$f(u) \mid \bar{Z}_m^{(-)}(u)$ (resp. $f(u) \mid \bar{Z}_m^{(+)}(u)$)
for all $d \ge 2$. 

%
% (1)
%
(1) First we consider the minus part. 
Put $n=1+(q-1)q^{d!}$. By Lemma \ref{lem53}, 
 we have $\deg B^{\alpha}_{n}  >0$.
Take an irreducible  $m\in A^+$ with $m \mid B^{\alpha}_{n}$.
Supposed that $d_1:=\deg m \le d$. Then  $n \equiv q \mod q^{d_1}-1$.
By Lemma \ref{lem51} and \ref{lem52}, we have 
\begin{eqnarray*}
B^{\alpha}_{n}  \equiv B^{\alpha}_{q}  \equiv 1 \mod m.
\end{eqnarray*}
This contradicts $m \mid B^{\alpha}_{n} $. So $d_1>d$.
Take $a, b \in \mathbb{Z}$ with
\begin{eqnarray*}
n=a(q^{d_1}-1)+b, \text{~~} 0\le b\le q^{d_1}-2.
\end{eqnarray*}
Then $b \not\equiv 0 \mod q-1$ and 
$
B^{\alpha}_{b}\equiv B^{\alpha}_{n} \equiv 0 \mod m.
$
By Theorem\ref{th41}, we obtain $f(u) \mid \bar{Z}_m^{(-)}(u)$. 
\quad \\ \quad
%
% (2)
%
(2) Next we consider the plus part. 
Let $n=(q-1)+(q-1)q^{d!}$.
By the same argument of (1),
we can take an irreducible $m \in A^+$ such that 
\begin{eqnarray*}
d_2:=\deg m >d \text{\; and \;}m \mid B^{\alpha}_{n}.
\end{eqnarray*}
Let $a, b \in \mathbb{Z}$ with
\begin{eqnarray*}
n=a(q^{d_2}-1)+b, \;\; 0\le b\le q^{d_2}-2.
\end{eqnarray*}
Then we have
\begin{eqnarray*}
b \equiv 0 \mod q-1,\;\;  B^{\alpha}_{b} \equiv 0 \mod m.
\end{eqnarray*}
Supposed that $b=0$.  Then there exists $c \in \mathbb{Z} \; ( \; 0\le c<d_2 \;)$
such that 
\begin{eqnarray*}
 (q-1)+(q-1)q^{c} \equiv 0 \mod q^{d_2}-1.
\end{eqnarray*} 
This contradicts $d_2 >d=2$. So $b>0$. 
It follows that $f(u) \mid \bar{Z}_m^{(+)}(u)$.
\end{proof}
%
%
%
% Remark
%
%
\begin{Remark}
If $q=2$,
then $Z_m^{(-)}(u)=1$
because $K_m=K_m^+$.
\end{Remark}
\begin{Remark}
We consider the case $f(u)=u-1$. By the equivalences (\ref{eq41}) and (\ref{eq42}), 
we see that Theorem \ref{th51}
leads Theorem \ref{th12}.
\end{Remark}

\text{ }\\
Daisuke Shiomi \\
Department of Mathematical Sciences \\
Faculty of Science, Yamagata University \\
Kojirakawa-machi 1-4-12 \\
Yamagata 990-8560, Japan \\
shiomi@sci.kj.yamagata-u.ac.jp

\end{document}